\DeclareMathOperator*{\R}{Re}
\newcommand{\dd}{\mathrm{d}}
\newcommand{\B}{\mathcal{B}}
\newcommand{\T}{(T(t))_{t\ge0}}
\newcommand{\TB}{(T_B(t))_{t\ge0}}
\newcommand{\RR}{\mathbb{R}}
\newcommand{\CC}{\mathbb{C}}
\newcommand{\ZZ}{\mathbb{Z}}
\newcommand{\NN}{\mathbb{N}}
\newcommand{\inv}{^{-1}}
\newtheorem{thm}{Theorem}[section]
\newtheorem{prp}[thm]{Proposition}
\newtheorem{lem}[thm]{Lemma}
\newtheorem{cor}[thm]{Corollary}
\theoremstyle{definition}
\newtheorem{rem}[thm]{Remark}
\numberwithin{equation}{section} 
\begin{document}

\title[A non-uniform Datko--Pazy theorem]{A non-uniform Datko--Pazy theorem for bounded operator semigroups}

\author[L.~Paunonen]{Lassi Paunonen}
\address[L.~Paunonen]{Mathematics, Faculty of Information Technology and Communication Sciences, Tampere University, PO~ Box 692, 33101 Tampere, Finland}
 \email{lassi.paunonen@tuni.fi}

\author[D.~Seifert]{David Seifert}
\address[D.~Seifert]{School of Mathematics, Statistics and Physics, Newcastle University, Herschel Building, Newcastle upon Tyne, NE1 7RU, United Kingdom}
\email{david.seifert@ncl.ac.uk}

\author[N.~Vanspranghe]{Nicolas Vanspranghe}
\address[N.~Vanspranghe]{Mathematics, Faculty of Information Technology and Communication Sciences, Tampere University, PO~ Box 692, 33101 Tampere, Finland}
 \email{nicolas.vanspranghe@tuni.fi}

\begin{abstract}
We present a non-uniform analogue of the classical Datko--Pazy theorem. 
Our main result shows that an integrability condition imposed on orbits originating in a fractional domain of the generator   (as opposed to all orbits)  implies polynomial stability of  a bounded $C_0$-semigroup.
 As an application of this result we establish polynomial stability of a semigroup under a certain non-uniform Lyapunov-type condition. We moreover give
a new proof, under slightly weaker assumptions, of a recent result deducing polynomial stability from a certain non-uniform observability condition.
\end{abstract}

\subjclass{47D06, 34G10, 34D05, 35B40}
\keywords{$C_0$-semigroup, Datko--Pazy theorem, polynomial stability.}

\maketitle

\section{Introduction}\label{sec:int}

Recall that, given  a $C_0$-semigroup $\T$  of bounded linear operators on some Banach space, $\T$ is said to be \emph{uniformly exponentially stable} if there exist constants $M,\omega>0$ such that $\|T(t)\|\le Me^{-\omega t}$ for all $t\ge0$. The famous Datko--Pazy theorem provides a necessary and sufficient condition for a $C_0$-semigroup to be uniformly exponentially stable. We state the result as follows, referring the reader to~\cite[Thm.~5.1.2]{ABHN11} for both a proof of the Datko--Pazy theorem and for several further equivalent conditions. 

\begin{thm} \label{thm:Datko}
Let $\T$ be a $C_0$-semigroup on a Banach space $X$, and let $1\le p<\infty$. The following are equivalent:
\begin{enumerate}[(i)]
\item[\textup{(i)}]  $\T$ is uniformly exponentially stable;
\item[\textup{(ii)}] $T(\cdot)x\in L^p(0,\infty;X)$ for all $x\in X$.
\end{enumerate}
\end{thm}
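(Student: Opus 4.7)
The implication (i)$\Rightarrow$(ii) is immediate: if $\|T(t)\|\le Me^{-\omega t}$ for all $t\ge0$, then $\|T(\cdot)x\|^p$ is dominated by $M^p\|x\|^p e^{-p\omega t}$, which is integrable on $(0,\infty)$. So the work is in proving (ii)$\Rightarrow$(i), which I would attack in three stages: pass from the pointwise assumption to a uniform estimate via the closed graph theorem, upgrade this to boundedness of $\T$, and finally extract polynomial decay $\|T(t)\|=O(t^{-1/p})$, which forces exponential decay.

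\textbf{Stage 1 (closed graph).} Define $\Phi\colon X\to L^p(0,\infty;X)$ by $\Phi x=T(\cdot)x$. Condition (ii) makes $\Phi$ everywhere defined. I would check that $\Phi$ has closed graph: if $x_n\to x$ and $\Phi x_n\to f$ in $L^p$, then a subsequence of $\Phi x_n$ converges pointwise a.e.\ to $f$, while strong continuity gives $T(t)x_n\to T(t)x$ pointwise in $t$, so $f=T(\cdot)x$ a.e. The closed graph theorem then yields a constant $C>0$ with
\begin{equation}
\int_0^\infty \|T(s)x\|^p\,\dd s \le C^p\|x\|^p \qquad (x\in X).
\end{equation}

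\textbf{Stage 2 (boundedness).} Because $\T$ is a $C_0$-semigroup, there is $M_1\ge1$ with $\|T(r)\|\le M_1$ for $r\in[0,1]$. For $t\ge1$ and $s\in[t-1,t]$ write $T(t)x=T(t-s)T(s)x$, take norms, raise to the $p$th power, and integrate $s$ over $[t-1,t]$; using Stage~1 this gives $\|T(t)x\|^p\le M_1^p C^p\|x\|^p$. Combined with the bound on $[0,1]$, this yields a global bound $\|T(t)\|\le M_2$ for all $t\ge0$.

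\textbf{Stage 3 (decay).} Now for any $t\ge0$ and $s\in[0,t]$ the semigroup property gives $\|T(t)x\|\le M_2\|T(s)x\|$. Raising to the $p$th power and integrating in $s$ over $[0,t]$ produces
\begin{equation}
t\,\|T(t)x\|^p \le M_2^p\int_0^t\|T(s)x\|^p\,\dd s \le M_2^p C^p\|x\|^p,
\end{equation}
so $\|T(t)\|\le M_2 C\, t^{-1/p}\to 0$. Hence there exists $t_0>0$ with $\|T(t_0)\|<1$, and the standard formula $\omega_0=\inf_{t>0}t^{-1}\log\|T(t)\|$ for the growth bound then yields $\omega_0<0$, which is equivalent to uniform exponential stability.

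The main obstacle is the passage from the qualitative hypothesis (ii) to the quantitative estimate in Stage~1; once that is in place the remaining steps are essentially a two-step bootstrap (integrability $\Rightarrow$ boundedness $\Rightarrow$ polynomial decay $\Rightarrow$ exponential decay) that exploits only the semigroup law and the elementary identity $\int_0^t 1\,\dd s = t$.
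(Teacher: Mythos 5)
Your proof is correct; the paper does not prove Theorem~\ref{thm:Datko} itself but defers to~\cite[Thm.~5.1.2]{ABHN11}, and your argument (closed graph theorem to get a uniform $L^p$ bound, then the identity $t\|T(t)x\|^p\le\int_0^t\|T(t-s)T(s)x\|^p\,\dd s$ to extract decay) is exactly the standard route taken there. It is also precisely the template the authors reuse in their proof of Theorem~\ref{thm:poly_Datko}, where boundedness of the semigroup is assumed so that your Stage~2 is not needed.
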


In many applications of semigroup theory, for instance to the study of energy decay of damped waves, uniform exponential stability is often too much to hope for and one is instead interested in weaker notions of stability. One important notion in this context is \emph{non-uniform stability}, sometimes called \emph{semi-uniform stability}. Let $\T$ be a $C_0$-semigroup with generator $A$. Recall that the semigroup is said to be \emph{bounded} if $\sup_{t\ge0}\|T(t)\|<\infty$, and a bounded $C_0$-semigroup $\T$ is said to be \emph{non-uniformly stable} if $\|T(t)(\lambda-A)\inv\|\to0$ as $t\to\infty$ for some, or equivalently all, $\lambda\in\rho(A)$. In the case where $\|T(t)(\lambda-A)\inv\|=O(t^{-\alpha})$ as $t\to\infty$ for some $\alpha>0$ one also speaks of \emph{polynomial stability} of the semigroup.  For a general overview of various notions of stability, and their relevance to damped waves, we refer the reader to the survey article~\cite{ChSeTo20}.

Our main result, Theorem~\ref{thm:poly_Datko}, is a non-uniform analogue of Theorem~\ref{thm:Datko} for {bounded} $C_0$-semigroups. It establishes polynomial as opposed to uniform exponential stability of the semigroup by requiring the integrability assumption in condition~(ii) to hold only for elements of the domain of a fractional power associated with the generator. In Theorem~\ref{thm:weak-Lp} we moreover establish a variant of our main result in which the integrability condition for semigroup orbits is replaced by an integrability condition for weak orbits. 
In Section~\ref{sec:Lyapunov} we give a sufficient condition for polynomial stability in terms of a non-uniform Lyapunov condition.
Finally, in Section~\ref{sec:obs}, we use Theorem~\ref{thm:poly_Datko} to prove polynomial stability of a semigroup under a certain non-uniform observability condition motivated by the study of (abstract) damped wave equations.

We use standard notation. In particular, if $X$ and $Y$ are (always complex) Banach spaces and $A\colon D(A)\subseteq X\to Y$ is a linear operator we denote  the domain of $ A$ by $D(A)$ and the resolvent set of $A$ by $\rho(A)$. The set of bounded linear operators from $X$ to $Y$ is denoted by $\B(X,Y)$, and we write $\B(X)$ for $\B(X,X)$. We moreover make use of standard asymptotic notation such as `big-O' and `little-o' notation, and for real-valued quantities $p$ and $q$, we use the notation $p\lesssim q$ if $p\leq K q$ for some constant $K > 0$ which is independent of all the parameters that are free to vary in the given situation. We let $\CC_+=\{\lambda\in\CC:\R\lambda>0\}$ denote the open right half-plane, and we write $\RR_+=[0,\infty)$ for the non-negative half-line.

\section{Non-uniform Datko--Pazy theorems}

Our main result is the following non-uniform version of Theorem~\ref{thm:Datko}.

\begin{thm}\label{thm:poly_Datko}
Let $\T$ be a bounded $C_0$-semigroup on a Banach space $X$, with generator $A$. Furthermore, let $1\le p<\infty$ and $\beta>0$, and suppose that $T(\cdot)x\in L^p(0,\infty;X)$ for all $x\in D((-A)^\beta)$. Then $i\RR\subseteq\rho(A)$ and 
\begin{equation}\label{eq:poly}
\|T(t)A\inv\|=O\big(t^{-1/(p\beta)}\big),\qquad t\to\infty.
\end{equation}
Moreover, if $X$ is a Hilbert space then $\|T(t)x\|=o(t^{-1/(p\beta)})$ as $t\to\infty$ for all $x\in D(A)$.
\end{thm}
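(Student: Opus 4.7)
The argument proceeds in four stages: (i) a closed-graph step to convert the orbit integrability into a uniform bound; (ii) a Datko-type trick using boundedness of $\T$ to extract polynomial pointwise decay on $D((-A)^\beta)$; (iii) iteration/interpolation to transfer this decay to $D(A)$ with the sharp rate $t^{-1/(p\beta)}$; and (iv) the spectral conclusion $i\RR\subseteq\rho(A)$ together with the $o$-rate in the Hilbert case.

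Since $\T$ is bounded, $(-\infty,0)\subseteq\rho(-A)$, so $-A$ is non-negative in the sense of Komatsu and its fractional powers are well-defined; $(1-A)^\beta$ has bounded inverse and $\|(1-A)^\beta\cdot\|$ is equivalent to the graph norm on $D((-A)^\beta)$. Applying the closed graph theorem to $x\mapsto T(\cdot)x$, viewed as a map from $D((-A)^\beta)$ into $L^p(0,\infty;X)$, produces a constant $C$ with $\|T(\cdot)x\|_{L^p}\le C\|(1-A)^\beta x\|$. Writing $M=\sup_{t\ge0}\|T(t)\|$ and using $\|T(t)x\|\le M\|T(s)x\|$ for $0\le s\le t$, integration in $s$ over $[0,t]$ yields
\[
t\|T(t)x\|^p\le M^p\int_0^t\|T(s)x\|^p\,\dd s\le(MC)^p\|(1-A)^\beta x\|^p,
\]
which is equivalent to $\|T(t)(1-A)^{-\beta}\|\lesssim t^{-1/p}$.

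Fix an integer $n$ with $n\beta\ge1$. Since $T(t)$ commutes with fractional powers, $T(t)(1-A)^{-n\beta}=\bigl[T(t/n)(1-A)^{-\beta}\bigr]^n$, whence $\|T(t)(1-A)^{-n\beta}\|\lesssim t^{-n/p}$ by submultiplicativity. Interpolating the bounds $\|T(t)\|_{X\to X}\le M$ and $\|T(t)\|_{D((1-A)^{n\beta})\to X}\lesssim t^{-n/p}$ at parameter $\theta=1/(n\beta)\in(0,1]$ via real interpolation on the fractional scale, and using the continuous embedding $D(A)\hookrightarrow(X,D((1-A)^{n\beta}))_{1/(n\beta),\infty}$, yields $\|T(t)(1-A)\inv\|\lesssim t^{-1/(p\beta)}$.

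In stage (iv), $\|T(t)(1-A)\inv\|\to0$ combined with boundedness of $\T$ implies, by the standard characterisation of non-uniform stability for bounded $C_0$-semigroups (see e.g.\ \cite{ChSeTo20}), that $i\RR\subseteq\rho(A)$. In particular $0\in\rho(A)$, and the identity $A\inv=(1-A)\inv(A\inv-I)$ gives $\|T(t)A\inv\|\lesssim\|T(t)(1-A)\inv\|=O(t^{-1/(p\beta)})$. In the Hilbert case, Borichev--Tomilov yields the resolvent bound $\|R(i\xi,A)\|=O(|\xi|^{p\beta})$, and the pointwise $o(t^{-1/(p\beta)})$ bound on $D(A)$ follows by the Rozendaal--Seifert--Stahn refinement together with density of $D(A^2)$ in $D(A)$. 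The principal obstacle is stage (iii): on a general Banach space one cannot freely interpolate $T(t)(1-A)^{-\gamma}$ in the complex parameter $\gamma$, since imaginary fractional powers need not be bounded, so one must work with the real-interpolation scale $(X,D((1-A)^{n\beta}))_{\theta,\infty}$ (or, alternatively, with a Yosida-regularisation plus moment-inequality argument), paying careful attention to the subtle continuous inclusions between these interpolation spaces and the fractional domains $D((1-A)^\alpha)$.
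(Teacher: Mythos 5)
Your argument is correct and follows essentially the same route as the paper: the closed graph theorem plus the Datko averaging trick give $\|T(t)(I-A)^{-\beta}\|=O(t^{-1/p})$, the decay rate is then transferred to $(I-A)^{-1}$, and the conclusions follow from the Batty--Duyckaerts theorem and the Borichev--Tomilov theorem. The only difference is in the rate-transfer step, where the paper simply invokes the moment inequality via \cite[Prop.~3.1]{BEPS06} while you re-derive the same fact by iterating the bound to $(I-A)^{-n\beta}$ and real-interpolating --- the technical concern you flag about interpolation on the fractional domain scale is exactly what that cited proposition takes care of.
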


\begin{proof}
The operator $(I-A)^{-\beta}$ maps $X$ bijectively onto $D((-A)^\beta)$, so the assumption that $T(\cdot)x\in L^p(0,\infty;X)$ for all $x\in D((-A)^\beta)$ is equivalent to requiring that $T(\cdot)(I-A)^{-\beta}x\in L^p(0,\infty;X)$ for all $x\in X$. Hence  $x\mapsto T(\cdot)(I-A)^{-\beta}x$ is a well-defined linear map from $X$ into $L^p(0,\infty;X)$, and it is straightforward to verify that its graph is closed. By the closed graph theorem there exists a constant $K>0$ such that 
$$\int_0^\infty\|T(t)(I-A)^{-\beta}x\|^p\,\dd t\le K^p\|x\|^p,\qquad x\in X.$$
Thus, letting $M=\sup_{t\ge0}\|T(t)\|$, we have
$$t\|T(t)(I-A)^{-\beta}x\|^p=\int_0^t\|T(t-s)T(s)(I-A)^{-\beta}x\|^p\,\dd s\le K^pM^p\|x\|^p$$
for all $x\in X$ and $t>0$, and hence $\|T(t)(I-A)^{-\beta}\|=O(t^{-1/p})$ as $t\to\infty$. It follows from the moment inequality as in~\cite[Prop.~3.1]{BEPS06} that $\|T(t)(I-A)^{-1}\|=O(t^{-1/(p\beta)})$ as $t\to\infty$, and hence $i\RR\subseteq\rho(A)$ by~\cite[Thm.~1.1]{BatDuy08}. Now~\eqref{eq:poly} follows since the operator $(I-A)A\inv$ is bounded, and the final claim follows from~\cite[Thm.~2.4]{BorTom10}.
\end{proof}

\begin{rem}\label{rem:converse}
\begin{enumerate}[(a)]
\item The conclusion of Theorem~\ref{thm:poly_Datko} implies in particular that the semigroup $\T$ is \emph{strongly stable} in the sense that $\|T(t)x\|\to0$ as $t\to\infty$ for all $x\in X$. This follows from a standard density argument using boundedness of the semigroup.
\item There is a straightforward partial converse of Theorem~\ref{thm:poly_Datko}. Indeed, if  $\|T(t)A\inv\|=O(t^{-\alpha})$ as $t\to\infty$ for some $\alpha>0$ and if $\beta>0$, then $T(\cdot)x\in L^p(0,\infty;X)$ for all $x\in D((-A)^\beta)$ provided that $p>(\alpha\beta)\inv$.  We leave open whether stronger converse statements hold.
\end{enumerate}
\end{rem}

Our next result is a non-uniform version of~\cite[Thm.~1.1]{Wei88}; see also \cite{Glu15}. Here we replace the integrability condition for orbits by a weak integrability condition but we nevertheless obtain the same conclusion in the Hilbert space setting; see Remark~\ref{rem:Banach} below for a slightly less sharp version of the result on general Banach spaces.

\begin{thm}
\label{thm:weak-Lp}
Let $\T$ be a bounded $C_0$-semigroup on a Hilbert space $X$, with generator $A$. Furthermore, let $1\le p <\infty$ and $\beta > 0$, and suppose that
\begin{equation}
\label{eq:weak_Lp}
\int_0^{\infty} |\langle T(t)x, y \rangle|^p \, \dd t <  \infty,\qquad x \in D((-A)^\beta),\ y \in X.
\end{equation}
Then $i\RR\subseteq\rho(A)$ and
\begin{equation}
\label{eq:sg_decay}
\| T(t) A\inv \| = O\big(t^{-1/(p\beta)}\big), \qquad t \to  \infty.
\end{equation}
Furthermore, $\| T(t) x\| = o\big(t^{-1/(p\beta)}\big)$ as $ t \to  \infty$ for all $x\in D(A)$.
\end{thm}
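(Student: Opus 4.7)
The plan is to parallel the proof of Theorem~\ref{thm:poly_Datko}: first extract a bilinear $L^p$ estimate from hypothesis~\eqref{eq:weak_Lp}, then convert it into an operator norm decay $\|T(t)(I-A)^{-\beta}\|=O(t^{-1/p})$, and finally invoke the moment inequality, the Batty--Duyckaerts theorem, and the Borichev--Tomilov theorem exactly as in Theorem~\ref{thm:poly_Datko}. For the first step, the bijection $(I-A)^{-\beta}\colon X\to D((-A)^\beta)$ reduces hypothesis~\eqref{eq:weak_Lp} to the statement that $t\mapsto\langle T(t)(I-A)^{-\beta}x,y\rangle$ belongs to $L^p(0,\infty)$ for all $x,y\in X$. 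For each fixed $y$, the linear map $X\to L^p(0,\infty)$ thus defined has closed graph, hence is bounded; by symmetry the same is true for fixed $x$. Applying the uniform boundedness principle for separately continuous bilinear forms yields a constant $K>0$ with
\begin{equation*}
\int_0^\infty|\langle T(t)(I-A)^{-\beta}x,y\rangle|^p\,\dd t\le K^p\|x\|^p\|y\|^p,\qquad x,y\in X.
\end{equation*}

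To carry out the decisive second step, I would exploit the Hilbert space duality $\|z\|=\sup_{\|y\|\le 1}|\langle z,y\rangle|$ together with the quantitative modulus of continuity of $s\mapsto T(s)(I-A)^{-\beta}x$ (Hölder of exponent $\min(\beta,1)$, guaranteed by $(I-A)^{-\beta}x\in D((-A)^{\min(\beta,1)})$). For each $t>0$, one can select a unit vector $y_t$ with $\langle T(t)(I-A)^{-\beta}x,y_t\rangle=\|T(t)(I-A)^{-\beta}x\|$; the orbit continuity transfers the pointwise value at $s=t$ to a neighbourhood of $t$, and combining with the bilinear bound along $y_t$ should produce the pointwise decay. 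An alternative route proceeds via the Laplace transform: Hölder's inequality applied to the bilinear bound yields $\|(\lambda-A)\inv(I-A)^{-\beta}\|=O((\R\lambda)^{-1/p'})$ on $\CC_+$, and on a Hilbert space this can be combined with Plancherel-type arguments (for $p=2$) or analytic-extension techniques to produce the resolvent bound on $i\RR$ needed to finish via Borichev--Tomilov.

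The principal obstacle is precisely the upgrade from the weak bilinear bound to the operator norm decay: weak $L^p$ integrability of a vector-valued function does not in general imply its strong $L^p$ integrability, even on Hilbert spaces, so a naive imitation of the argument in Theorem~\ref{thm:poly_Datko} fails at this point. The proof must genuinely use both the orbit structure (in particular the almost monotonicity $\|T(s)(I-A)^{-\beta}x\|\le M\|T(t)(I-A)^{-\beta}x\|$ for $s\ge t$) and the Hilbert space duality in a non-trivial way.
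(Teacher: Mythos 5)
Your first step (the bilinear bound obtained by applying the closed graph theorem twice) matches the paper exactly, and your instinct that the real difficulty is upgrading the weak $L^p$ bound is correct. But neither of the two routes you sketch for that upgrade constitutes a proof, and the first one cannot work as stated. The norming-vector argument with $y_t$ and the H\"older continuity of the orbit yields, for a window of length $\delta$, the estimate $\|T(t)(I-A)^{-\beta}x\|\lesssim \delta^{\min(\beta,1)}\|x\|+\delta^{-1/p}\|x\|$; the optimal $\delta$ is a \emph{constant} independent of $t$, so you recover boundedness but no decay, because the bilinear estimate gives no smallness of $\int_t^{t+\delta}|\langle T(s)(I-A)^{-\beta}x,y_t\rangle|^p\,\dd s$ as $t\to\infty$ once $y_t$ is allowed to vary with $t$. (The same obstruction blocks the splitting $t\|T(t)z\|^p=\int_0^t\|T(t-s)T(s)z\|^p\,\dd s$ from Theorem~\ref{thm:poly_Datko}: dualising produces $\langle T(s)z,T(t-s)^*y_t\rangle$, whose second argument moves with $s$.) The paper never proves $\|T(t)(I-A)^{-\beta}\|=O(t^{-1/p})$ in this setting; it abandons the time-domain route entirely and works with the resolvent.

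Your alternative Laplace-transform route starts correctly --- H\"older applied to the resolvent representation does give $\|(\lambda-A)^{-1}(\mu-A)^{-\beta}\|\le K(\R\lambda)^{-1/q}$ on $\CC_+$ (and $\le K$ when $p=1$) --- but ``Plancherel-type arguments or analytic-extension techniques'' is precisely where the content is missing. The paper proceeds in three concrete steps from that point. First, the resolvent identity $(\lambda-A)^{-1}=\sum_{k=0}^{n-1}(\mu-\lambda)^k(\mu-A)^{-(k+1)}+(\mu-\lambda)^n(\lambda-A)^{-1}(\mu-A)^{-n}$ with $n>\beta$ converts the smoothed bound into $\|(\lambda-A)^{-1}\|\lesssim(\R\lambda)^{-1/q}$ near the imaginary axis, and a Neumann series argument then yields $i\RR\subseteq\rho(A)$. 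Second, the estimate from the proof of \cite[Lem.~3.2]{LatShv01}, namely $\|(\lambda-A)^{-1}\|\lesssim|\lambda|^{\beta}\bigl(1+\|(\lambda-A)\inv(\mu-A)^{-\beta}\|\bigr)$ for $0<\R\lambda<1$ and $|\lambda|$ large, is the key lemma that trades the smoothing factor $(\mu-A)^{-\beta}$ for a polynomial loss in $|\lambda|$; nothing in your sketch plays this role. Third, optimising the resulting bound $\|(r+is-A)^{-1}\|\lesssim r^{-1/q}|s|^{\beta}$ by choosing $r=c|s|^{-p\beta}$ and running one more Neumann series gives $\|(is-A)^{-1}\|=O(|s|^{p\beta})$, after which \cite[Thm.~2.4]{BorTom10} delivers both conclusions (note the target here is a resolvent bound of order $|s|^{p\beta}$ on the imaginary axis, not a semigroup bound of order $t^{-1/p}$). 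Without the Latushkin--Shvydkoy step and the optimisation in $\R\lambda$, your argument does not close.
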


\begin{proof}
Let $\mu\in\rho(A)$ be fixed. Arguing as in the proof of Theorem~\ref{thm:poly_Datko}, this time applying the closed graph theorem twice, we see that  there exists a constant $K>0$ such that 
\begin{equation}
\label{eq:weak_bd}
\int_0^{\infty} |\langle T(t)(\mu-A)^{-\beta}x, y \rangle|^p \, \dd t\le K^p\|x\|^p\|y\|^p,\qquad x, y\in X.
\end{equation}
Let $\lambda\in\CC_+$. Using boundedness of the semigroup $\T$ and the integral representation of the resolvent, we have
\begin{equation}
\label{eq:weak_rep}| \langle (\lambda - A)^{-1}(\mu - A)^{-\beta}x, y\rangle | \leq \int_0^{\infty} e^{- (\R \lambda) t} | \langle T(t)(\mu - A)^{-\beta}  x, y\rangle | \, \dd t
\end{equation}
for all $x,y\in Y$. Suppose first that $p\in(1,\infty)$ and let $q\in(1,\infty)$ denote the Hölder conjugate of $p$. Applying Hölder's inequality in~\eqref{eq:weak_rep} and then using~\eqref{eq:weak_bd} we obtain the estimate
\begin{equation}
| \langle (\lambda - A)^{-1}(\mu - A)^{-\beta}x, y\rangle |\le \frac{K\|x\|\|y\|}{(q\R\lambda)^{1/q}},\qquad x,y\in X,
\end{equation}
which implies that $\|(\lambda - A)^{-1}(\mu - A)^{-\beta}\|\le K(\R\lambda)^{-1/q}$ for all $\lambda\in\CC_+$. Similarly, if $p=1$ we obtain $\|(\lambda - A)^{-1}(\mu - A)^{-\beta}\|\le K$ for all $\lambda\in\CC_+$. Now for $\lambda\in\rho(A)$ and $n\in\NN$ the resolvent identity gives
\begin{equation}
\label{eq:res_id}
(\lambda - A)^{-1} =  \sum_{k = 0}^{n - 1} (\mu-\lambda)^k (\mu -A)^{-(k+1)}  + ( \mu-\lambda)^n(\lambda - A)^{-1}(\mu - A)^{-n}.
\end{equation}
If  $n\in\NN$ satisfies $n>\beta$, then 
$$\|(\lambda - A)^{-1}(\mu-A)^{-n}\|\le\|(\mu-A)^{-(n-\beta)}\|\|(\lambda - A)^{-1}(\mu-A)^{-\beta}\|$$
for all $\lambda\in\rho(A)$. Fix $s\in\RR$. It follows from our previous estimates that for $\lambda=r+is$ with $r\in(0,1)$ we have 
$\|(\lambda - A)^{-1}\|\lesssim r^{-1/q}$ when $p>1$, and $\|(\lambda - A)^{-1}\|\lesssim 1$  when $p=1$. In particular,  we obtain $|\lambda-is|\|(\lambda-A)^{-1}\|<1$ for $\lambda=r+is$ with $r\in(0,1)$ sufficiently small, and hence $is\in\rho(A)$ by a standard Neumann series argument.

We now establish an estimate for the resolvent of $A$ along the imaginary axis. Modifying the choice of $\mu\in\rho(A)$ if necessary we obtain from the proof of~\cite[Lem.~3.2]{LatShv01} that there exists a constant $R\ge2$ such that 
\begin{equation}
\label{eq:LS_est}
\|(\lambda-A)^{-1}\|\lesssim|\lambda|^{\beta}\big(1+\|(\lambda-A)\inv(\mu-A)^{-\beta}\|\big)
\end{equation}
for all $\lambda\in\CC$ such that $0<\R\lambda<1$ and $|\lambda|\ge R$. If $p>1$ we thus obtain
$$\|(\lambda-A)^{-1}\|\lesssim|\lambda|^\beta\left(1+\frac{K}{(\R\lambda)^{1/q}}\right) $$
for $0<\R\lambda<1$ and $|\lambda|\ge R$, and hence $\|(\lambda-A)^{-1}\|\lesssim r^{-1/q} |s|^\beta $ for $\lambda=r+is$ with $r\in(0,1)$, $s\in\RR$ and $|\lambda|\ge R$. Setting $r=c|s|^{-p\beta}$ for sufficiently small $c>0$ we deduce that there exists $s_0>0$ such that $|\lambda-is|\|(\lambda-A)\inv\|\le1/2$ for all $s\in\RR$ with $|s|\ge s_0$, and hence by another Neumann series argument we obtain the resolvent estimate 
$\|(is-A)\inv\|\lesssim |s|^{p\beta}$ for such values of $s\in\RR$. On the other hand, if $p=1$ then~\eqref{eq:LS_est} combined with our earlier estimate for $\|(\lambda - A)^{-1}(\mu - A)^{-\beta}\|$ yields $\|(\lambda-A)\inv\|\lesssim|\lambda|^\beta$ for all $\lambda\in\CC$ 
such that $0<\R\lambda<1$ and $|\lambda|\ge R$. It follows by continuity that $\|(is-A)\inv\|\lesssim|s|^\beta$ for all $s\in\RR$ of sufficiently large absolute value. We have thus proved that $\|(is-A)\inv\|=O(|s|^{p\beta})$ as $|s|\to\infty$ for $1\le p<\infty$. Both of the remaining claims  now follow from~\cite[Thm.~2.4]{BorTom10}.
\end{proof}

\begin{rem}
\label{rem:Banach}
If we merely assume $X$ to be a Banach space, and replace~\eqref{eq:weak_Lp} by the condition
$$
\int_0^{\infty} |\langle T(t)x, x^* \rangle|^p \, \dd t <  \infty,\qquad x \in D((-A)^\beta),\ x^* \in X^*,$$ 
where $X^*$ denotes the dual space of $X$ and $\langle\cdot,\cdot\rangle$ is now the dual pairing between $X$ and $X^*$, then essentially the same argument as above shows that $i\RR\subseteq\rho(A)$ and $\|(is-A)\inv\|=O(|s|^{p\beta})$ as $|s|\to\infty$ for $1\le p<\infty$. It follows from~\cite[Thm.~1.5]{BatDuy08} that
$$\|T(t)A\inv\|=O\Bigg(\bigg(\frac{\log(t)}{t}\bigg)^{1/(p\beta)}\Bigg),\qquad t\to\infty.$$
Note that this estimate is worse by a logarithmic factor than the estimate in~\eqref{eq:sg_decay} obtained for the Hilbert space case.
\end{rem}

\section{A non-uniform Lyapunov-type condition}
\label{sec:Lyapunov}

Let $\T$ be a $C_0$-semigroup on a Hilbert space $X$, with generator $A$. The classical Lyapunov stability result states that the semigroup $\T$ is uniformly exponentially stable if and only if there exists a  non-negative self-adjoint operator $P\in\B(X)$ such that the \emph{Lyapunov equation}
\begin{equation}\label{eq:Lyapunov}
\langle PAx,y\rangle+\langle Px,Ay\rangle=-\langle x,y\rangle,\qquad x,y\in D(A),
\end{equation}
is satisfied; see for instance~\cite[Cor.~6.5.1]{CurZwa20}.
The following result shows that in the Hilbert space setting the integrability condition in Theorem~\ref{thm:poly_Datko} is equivalent to a non-uniform Lyapunov-type equation when $p=2$. In Corollary~\ref{cor:Lyapunov} below we combine this result with Theorem~\ref{thm:poly_Datko} to show that this non-uniform Lyapunov-type condition implies polynomial stability of the semigroup.
Here and elsewhere we regard the domain of an operator as a normed space with respect to the graph norm. We refer to the space of bounded \emph{conjugate-linear} functionals on a normed space as its \emph{antidual}.

\begin{prp}\label{prp:Lyapunov}
Let $\T$ be a bounded $C_0$-semigroup on a Hilbert space $X$, with generator $A$, and let $\beta>0$.  The following are equivalent:
\begin{enumerate}[(i)]
\item[\textup{(i)}]  $T(\cdot)x\in L^2(0,\infty;X)$ for all $x\in D((-A)^\beta);$
\item[\textup{(ii)}]\label{ii} There exists a bounded linear operator $P$ from $D((-A)^\beta)$ into its antidual such that $\langle Px,x\rangle_\beta\ge0$ for all $x\in D((-A)^\beta)$ and 
\begin{equation}\label{eq:P}
\langle PAx, y \rangle_\beta + \langle Px, Ay \rangle_\beta = - \langle x, y \rangle, \qquad x,y \in D((-A)^{\beta+1}),
\end{equation}
where $\langle \cdot,\cdot\rangle_\beta$ denotes the antiduality pairing for $D((-A)^\beta)$.
\end{enumerate}
Furthermore, the operator $P$ in item~\textup{(ii)} is unique and determined by 
\begin{equation}\label{eq:P_formula}
\langle Px, y \rangle_\beta = \int_0^{\infty} \langle T(t)x, T(t)y \rangle \, \dd t, \qquad x,y \in D((-A)^\beta).
\end{equation}
\end{prp}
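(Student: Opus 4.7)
The plan is to prove each implication separately and then derive the formula \eqref{eq:P_formula} and uniqueness of $P$ jointly as a corollary of Theorem~\ref{thm:poly_Datko}.

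For (i)$\Rightarrow$(ii), the closed graph argument used in the proof of Theorem~\ref{thm:poly_Datko} supplies a constant $K>0$ such that $\int_0^\infty\|T(t)x\|^2\,\dd t\le K^2\|x\|_\beta^2$ for every $x\in D((-A)^\beta)$, where $\|\cdot\|_\beta$ denotes the graph norm. By Cauchy--Schwarz the right-hand side of \eqref{eq:P_formula} then defines a bounded sesquilinear form on $D((-A)^\beta)\times D((-A)^\beta)$ and hence a bounded operator $P$ from $D((-A)^\beta)$ into its antidual, with $\langle Px,x\rangle_\beta\ge0$ immediate. To verify \eqref{eq:P}, I would fix $x,y\in D((-A)^{\beta+1})$; then $T(\cdot)x,T(\cdot)Ax$ (and similarly for $y$) all lie in $L^2(0,\infty;X)$, so $\|T(\cdot)x\|^2$ belongs to $W^{1,1}(\RR_+)\cap L^1(\RR_+)$ and therefore vanishes at infinity. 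Integrating the pointwise identity $\frac{\dd}{\dd t}\langle T(t)x,T(t)y\rangle=\langle T(t)Ax,T(t)y\rangle+\langle T(t)x,T(t)Ay\rangle$ over $\RR_+$ and using that the boundary term at infinity vanishes then yields~\eqref{eq:P}.

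For (ii)$\Rightarrow$(i), I would fix $x\in D((-A)^{\beta+1})$ and exploit the fact that $T(t)$ preserves $D((-A)^{\beta+1})$ and commutes with $(-A)^\beta$ on the relevant domains, so that $t\mapsto T(t)x$ is $C^1$ as a map into $D((-A)^\beta)$ with derivative $T(t)Ax$. Applying the product rule to the antiduality pairing and substituting $T(t)x$ into \eqref{eq:P} gives $\frac{\dd}{\dd t}\langle PT(t)x,T(t)x\rangle_\beta=-\|T(t)x\|^2$, so integration from $0$ to $s$ yields
\begin{equation}
\int_0^s\|T(t)x\|^2\,\dd t=\langle Px,x\rangle_\beta-\langle PT(s)x,T(s)x\rangle_\beta\le\|P\|\,\|x\|_\beta^2,\qquad s\ge0,
\end{equation}
where the inequality uses positivity of the form associated with $P$. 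The density of $D((-A)^{\beta+1})$ in $D((-A)^\beta)$ combined with Fatou's lemma then extends this to all $x\in D((-A)^\beta)$, establishing~(i).

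Once (i) is available, Theorem~\ref{thm:poly_Datko} together with Remark~\ref{rem:converse}(a) ensures that $\T$ is strongly stable on $X$. For $x\in D((-A)^{\beta+1})$ this forces both $\|T(s)x\|\to0$ and $\|(-A)^\beta T(s)x\|=\|T(s)(-A)^\beta x\|\to0$ as $s\to\infty$, hence $\|T(s)x\|_\beta\to0$ and therefore $\langle PT(s)x,T(s)x\rangle_\beta\to0$. Passing to the limit $s\to\infty$ in the displayed identity gives $\langle Px,x\rangle_\beta=\int_0^\infty\|T(t)x\|^2\,\dd t$ for $x\in D((-A)^{\beta+1})$, and polarization together with the density of $D((-A)^{\beta+1})$ in $D((-A)^\beta)$ and continuity of $P$ propagates this to \eqref{eq:P_formula} in full generality; uniqueness of $P$ is an immediate consequence. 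I expect the main technical point to be the rigorous verification that $t\mapsto T(t)x$ is $C^1$ as a map into $D((-A)^\beta)$ whenever $x\in D((-A)^{\beta+1})$, so that the Lyapunov equation \eqref{eq:P} may be differentiated along the orbit via the product rule in the antidual setting and the boundary term $\langle PT(s)x,T(s)x\rangle_\beta$ can subsequently be controlled by appealing to strong stability.
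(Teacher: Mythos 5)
Your proposal is correct and follows essentially the same route as the paper's proof: define $P$ by \eqref{eq:P_formula} via the closed graph theorem and Cauchy--Schwarz, differentiate $\langle T(t)x,T(t)y\rangle$ along orbits for $x,y\in D((-A)^{\beta+1})$, and for the converse integrate the Lyapunov identity along the $C^1$ orbit in $D((-A)^\beta)$ before extending by density. The only (harmless) cosmetic differences are that you kill the boundary term in (i)$\Rightarrow$(ii) by a $W^{1,1}\cap L^1$ argument where the paper invokes strong stability from Theorem~\ref{thm:poly_Datko} and Remark~\ref{rem:converse}(a), and that you recover \eqref{eq:P_formula} by polarization of the quadratic form rather than working directly with the sesquilinear identity \eqref{eq:FTC}.
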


\begin{proof}
Suppose that~(i) holds. Then  by Theorem~\ref{thm:poly_Datko} and Remark~\ref{rem:converse}(a) the semigroup $\T$ is strongly stable.  Furthermore, since $(-A)^\beta$ is an isomorphism from $D((-A)^\beta)$ onto $X$,  applying the Cauchy--Schwarz inequality  and applying the closed graph theorem as in the proof of Theorem~\ref{thm:poly_Datko} we see that there exists a constant $K>0$ such that
$$\int_0^\infty|\langle T(t)x,T(t)y\rangle|\,\dd t\le K\|(-A)^\beta x\|\|(-A)^\beta y\|,\qquad x,y\in D((-A)^\beta).$$
Thus if we define $P$ by the formula in~\eqref{eq:P_formula} then $P$ is indeed a bounded linear operator from $D((-A)^\beta)$ into its antidual, and it is clear that $\langle Px,x\rangle_\beta\ge0$ for all $x\in D((-A)^\beta)$. Let $x, y \in D((-A)^{\beta+1})$. Then
\begin{equation}
\label{eq:PAx}
\langle PAx, y \rangle_\beta = \int_0^{\infty} \langle T(t) Ax, T(t) y \rangle \, \dd t = \int_0^{\infty} \left \langle \frac{\dd}{\dd t}T(t)x, T(t)y \right \rangle\, \dd t,
\end{equation}
and similarly
\begin{equation}
\label{eq:Px}
\langle Px, Ay \rangle_\beta = \int_0^{\infty} \left \langle T(t)x, \frac{\dd}{\dd t} T(t)y \right \rangle \dd t.
\end{equation}
Using strong stability of $\T$ we deduce that
$$\langle PAx, y \rangle_\beta + \langle Px, Ay \rangle_\beta =\lim_{\tau\to\infty}\int_0^{\tau}\frac{\dd}{\dd t}  \langle T(t)x, T(t)y \rangle \, \dd t=-\langle x,y\rangle,$$
as required.

Now suppose, conversely, that~(ii) holds and let $V(x)=\langle Px,x\rangle_\beta$ for $x\in D((-A)^\beta)$. Then $V(x)\ge0$ for all $x\in D((-A)^\beta)$ and, since $P$ is assumed to be a bounded operator from  $D((-A)^\beta)$ into its antidual, there exists a constant $K>0$ such that
$$V(x)\le K\|(I-A)^\beta x\|^2,\qquad x\in D((-A)^\beta).$$
Now let $x\in D((-A)^{\beta+1})$. Then $$T(\cdot)x\in C^1\big(\RR_+,D((-A)^\beta))\big)\cap C\big(\RR_+,D((-A)^{\beta+1})\big),$$ 
so
$$\frac{\dd}{\dd t}V(T(t)x)=\langle PAT(t)x, T(t)x\rangle_\beta + \langle PT(t)x, AT(t)x\rangle_\beta = -\|T(t)x\|^2$$
for all $ t \geq 0$ by~\eqref{eq:P}. Thus
$$\int_0^\tau\|T(t)x\|^2\,\dd t=V(x)-V(T(\tau)x)\le V(x)\le K\|(I-A)^\beta x\|^2$$
for all $\tau>0$, and thus $T(\cdot)x\in L^2(0,\infty;X)$. Since $D((-A)^{\beta+1})$ is dense in $D((-A)^{\beta})$, an approximation argument yields $T(\cdot)x\in L^2(0,\infty;X)$ for all $x\in D((-A)^\beta)$, so~(i) holds.

It remains, finally, to show that the operator $P$ in~\textup{(ii)} is unique and determined by 
the formula in~\eqref{eq:P_formula}. Suppose therefore that $P$ is as described in~\textup{(ii)}   and let $x, y \in D((-A)^{\beta+1})$. Then using ~\eqref{eq:P} we have
\begin{equation}
\label{eq:diff-lyap-bis}
\frac{\dd}{\dd t} \langle PT(t)x, T(t)y\rangle_\beta = - \langle T(t)x, T(t)y \rangle,\qquad t\ge0,  
\end{equation} 
and therefore 
\begin{equation}
\label{eq:FTC}
\langle Px, y \rangle_\beta=\langle PT(t)x, T(t)y \rangle_\beta+\int_0^t\langle T(s)x, T(s)y \rangle\,\dd s,\qquad t\ge0.
\end{equation}
Since (i) holds we see, as in the proof of the implication from~(i) to~(ii), that the semigroup $\T$ is strongly stable, and hence $T(t)x,T(t)y\to0$ with respect to the graph norm on $D((-A)^\beta)$ as $t\to\infty$. It follows from continuity of $P$ that
$\langle PT(t)x, T(t)y \rangle_\beta \to 0$ as $t\to\infty$. Thus letting $t\to\infty$ in~\eqref{eq:FTC} shows that~\eqref{eq:P_formula} holds for all $x,y \in D((-A)^{\beta+1})$, and hence for all $x,y \in D((-A)^{\beta})$ by density.
\end{proof}

\begin{rem}
Note that boundedness of the semigroup $\T$ is not needed for the implication from~(ii) to~(i). 
\end{rem}

Proposition~\ref{prp:Lyapunov} is a natural non-uniform analogue of the classical Lyapunov stability result found for instance in~\cite[Lem.~4]{Dat70}, which can be combined with Theorem~\ref{thm:Datko} in order to recover the characterisation of exponential stability stated at the start of the section.
Here we obtain the following non-uniform analogue of this result.

\begin{cor}\label{cor:Lyapunov}
Let $\T$ be a bounded $C_0$-semigroup on a Hilbert space $X$, with generator $A$. Suppose that for some $\beta>0$ there exists a bounded linear operator $P$ from $D((-A)^\beta)$ into its antidual such that $\langle Px,x\rangle_\beta\ge0$ for all $x\in D((-A)^\beta)$ and 
\eqref{eq:P} holds. Then 
\begin{equation}\label{eq:poly}
\|T(t)x\|=o(t^{-\alpha}),\qquad t\to\infty
\end{equation}
 for all $x\in X$, where $\alpha=(2\beta)\inv$. 
 Conversely, if~\eqref{eq:poly} holds for some $\alpha>0$ and all $x\in X$, then for every $\beta>2\alpha\inv$ there exists a bounded linear operator $P$ from $D((-A)^\beta)$ into its antidual such that $\langle Px,x\rangle_\beta\ge0$ for all $x\in D((-A)^\beta)$ and 
\eqref{eq:P} holds. 
\end{cor}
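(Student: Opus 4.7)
Both directions reduce to Proposition~\ref{prp:Lyapunov}, which identifies existence of the Lyapunov operator $P$ (as in condition~(ii) there with $p=2$) with the $L^2$-integrability of $T(\cdot)x$ on $D((-A)^\beta)$.

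For the forward implication, the plan is to apply Proposition~\ref{prp:Lyapunov}\,(ii)$\Rightarrow$(i) to obtain $T(\cdot)x\in L^2(0,\infty;X)$ for every $x\in D((-A)^\beta)$, and then invoke Theorem~\ref{thm:poly_Datko} with $p=2$. This yields $\|T(t)A^{-1}\|=O(t^{-1/(2\beta)})$ together with $\|T(t)x\|=o(t^{-1/(2\beta)})$ for $x\in D(A)$, giving the asserted decay with $\alpha=(2\beta)^{-1}$. A density and boundedness argument, exploiting strong stability of $\T$ (see Remark~\ref{rem:converse}\,(a)), propagates the decay to general $x\in X$ in the form stated in the corollary.

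For the converse, the plan is to start from the polynomial decay and first derive a uniform operator bound $\|T(t)A^{-1}\|=O(t^{-\alpha})$, for instance by a Banach--Steinhaus argument on $D(A)$ equipped with the graph norm. Since $A^{-1}$ commutes with the semigroup, iteration via $T(nt)A^{-n}=(T(t)A^{-1})^n$ produces $\|T(t)A^{-n}\|\lesssim t^{-n\alpha}$ for every $n\in\NN$, and the moment inequality in the spirit of~\cite[Prop.~3.1]{BEPS06} interpolates these bounds so that $\|T(t)(I-A)^{-\beta}\|$ decays strictly faster than $t^{-1/2}$ whenever $\beta>2\alpha^{-1}$. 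This delivers $T(\cdot)x\in L^2(0,\infty;X)$ for every $x\in D((-A)^\beta)$, and Proposition~\ref{prp:Lyapunov}\,(i)$\Rightarrow$(ii) yields the sought operator $P$.

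The main obstacle is the converse direction: converting a scalar decay estimate for the semigroup into the fractional-domain $L^2$-bound required by Proposition~\ref{prp:Lyapunov}. The interpolation loss in that step is exactly what accounts for the factor-of-four gap between the sharp forward exponent $\alpha=(2\beta)^{-1}$ and the looser converse threshold $\beta>2\alpha^{-1}$.
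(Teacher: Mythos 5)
Your overall strategy coincides with the paper's: both directions are routed through Proposition~\ref{prp:Lyapunov}, combined with Theorem~\ref{thm:poly_Datko} in the forward direction and with the uniform boundedness principle plus the moment inequality (i.e.\ the partial converse of Remark~\ref{rem:converse}(b)) in the backward direction. The one step that does not work is your proposed ``density and boundedness argument'' upgrading $\|T(t)x\|=o\big(t^{-1/(2\beta)}\big)$ from $x\in D(A)$ to all $x\in X$. A density argument propagates the qualitative statement $\|T(t)x\|\to0$ (this is Remark~\ref{rem:converse}(a)) but never a rate: if one had $\|T(t)x\|=o(t^{-\alpha})$ for \emph{every} $x\in X$, then the uniform boundedness principle applied to the family $t^{\alpha}T(t)$, $t\ge1$, would give $\|T(t)\|=O(t^{-\alpha})$ and hence, by submultiplicativity, uniform exponential stability of $\T$ --- which is certainly not a consequence of the hypotheses. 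The quantifier ``for all $x\in X$'' in the statement of the corollary should be read as ``for all $x\in D(A)$'': that is exactly what Theorem~\ref{thm:poly_Datko} delivers, it is all the paper's own (one-line) proof of the forward implication establishes, and it is also how the converse direction uses \eqref{eq:poly}, namely via $\|T(t)(I-A)^{-1}\|=O(t^{-\alpha})$ rather than $\|T(t)\|=O(t^{-\alpha})$. You should delete the density step rather than attempt to repair it.

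Two smaller points on the converse. First, before writing $\|T(t)A^{-1}\|$ you need $0\in\rho(A)$; Banach--Steinhaus on $D(A)$ with the graph norm only yields $\|T(t)(I-A)^{-1}\|=O(t^{-\alpha})$, and the paper then invokes \cite[Thm.~1.1]{BatDuy08}, as in the proof of Theorem~\ref{thm:poly_Datko}, to obtain $i\RR\subseteq\rho(A)$ before replacing $(I-A)^{-1}$ by $A^{-1}$. (Your subsequent interpolation could in any case be run with $(I-A)^{-\beta}$ throughout, avoiding the issue.) Second, your closing remark attributes the ``factor of four'' between the forward exponent $\beta=(2\alpha)^{-1}$ and the converse threshold $\beta>2\alpha^{-1}$ to interpolation loss; in fact the moment-inequality step is lossless here and only requires $\beta>(2\alpha)^{-1}$ (Remark~\ref{rem:converse}(b) with $p=2$ needs $2>(\alpha\beta)^{-1}$), so the threshold $2\alpha^{-1}$ in the statement is simply not optimized rather than forced by the method.
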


\begin{proof}
The first implication follows immediately from Proposition~\ref{prp:Lyapunov} and Theorem~\ref{thm:poly_Datko}. Conversely, if \eqref{eq:poly} holds for some $\alpha>0$, then  $\|T(t)(I-A)\inv\|=O(t^{-\alpha})$ as $t\to\infty$ by the uniform boundedness principle, and hence $i\RR\subseteq\rho(A)$ and $\|T(t)A\inv\|=O(t^{-\alpha})$ as $t\to\infty$ as in the proof of Theorem~\ref{thm:poly_Datko}. It follows from Remark~\ref{rem:converse}(b) that $T(\cdot)x\in L^2(0,\infty;X)$ for all $x\in D((-A)^\beta)$ for $\beta>2\alpha\inv$. Now Proposition~\ref{prp:Lyapunov} gives the result.
\end{proof}

\section{An observability condition for polynomial stability}
\label{sec:obs}

In this final section we show how Theorem~\ref{thm:poly_Datko} can be used to deduce polynomial stability of a   semigroup from an observability-type condition. We begin by proving the following auxiliary result. 

\begin{lem}\label{lem}
Let $\T$ be a bounded $C_0$-semigroup on a Banach space $X$, with generator $A$, and let $Y$ be another Banach space. 
Furthermore, let $1\le p <\infty$ and let $C\in\B(D(A),Y)$, where $D(A)$ is endowed with the graph norm. Suppose there exists $p\in[1,\infty)$ such that for every $x\in D(A)$ the map $t\mapsto \|T(t)x\|^p$ is absolutely continuous on $(0,\infty)$, with 
\begin{equation}\label{eq:abs_cont}
\frac{\dd }{\dd t}\|T(t)x\|^p\le-\|CT(t)x\|^p
\end{equation}
for almost all $t\in(0,\infty)$, and suppose in addition that there exist constants $K,\beta,\tau>0$ such that
\begin{equation}\label{eq:obs}
\|(I-A)^{-\beta}x\|^p\le K\int_0^\tau\|CT(t)x\|^p\,\dd t,\qquad x\in D(A).
\end{equation}
Then $i\RR\subseteq\rho(A)$ and 
$$
\|T(t)A\inv\|=O\big(t^{-1/(p\beta)}\big),\qquad t\to\infty.
$$
Moreover, if $X$ is a Hilbert space then $\|T(t)x\|=o(t^{-1/(p\beta)})$ as $t\to\infty$ for all $x\in D(A)$.
\end{lem}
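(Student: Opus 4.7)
The plan is to reduce the lemma to Theorem~\ref{thm:poly_Datko} by establishing that $T(\cdot)y \in L^p(0,\infty;X)$ for every $y \in D((-A)^\beta)$. Once this is done, the resolvent inclusion, the $O(t^{-1/(p\beta)})$ rate, and the Hilbert-space $o$-refinement are immediate from Theorem~\ref{thm:poly_Datko}. So the entire content of the proof is turning the observability and dissipation hypotheses into such an $L^p$ bound.

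First I would integrate the dissipation inequality~\eqref{eq:abs_cont}. For $x \in D(A)$ the function $t \mapsto \|T(t)x\|^p$ is absolutely continuous on every compact subinterval of $(0,\infty)$ and continuous at $0$, so the fundamental theorem of calculus gives
\begin{equation}
\int_0^{S} \|C T(t) x\|^p \, \dd t \le \|x\|^p - \|T(S)x\|^p \le \|x\|^p
\end{equation}
for every $S > 0$; letting $S \to \infty$ yields $\int_0^\infty \|CT(t)x\|^p\,\dd t \le \|x\|^p$ for all $x \in D(A)$.

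Next I would apply the observability hypothesis~\eqref{eq:obs} not to $x$ itself, but to $T(s)x$ (which lies in $D(A)$ since $T(s)$ leaves $D(A)$ invariant). Using that $(I-A)^{-\beta}$ commutes with $T(s)$, this gives
\begin{equation}
\|T(s)(I-A)^{-\beta} x\|^p \le K \int_0^\tau \|CT(t+s)x\|^p\,\dd t = K\int_s^{s+\tau} \|CT(u)x\|^p\,\dd u
\end{equation}
for all $x \in D(A)$ and $s \ge 0$. Integrating in $s$ over $(0,\infty)$ and swapping the order of integration via Fubini, using that for each $u \ge 0$ the set $\{s \ge 0 : s \le u \le s+\tau\}$ has length at most $\tau$, yields
\begin{equation}
\int_0^\infty \|T(s)(I-A)^{-\beta} x\|^p \,\dd s \le K\tau \int_0^\infty \|CT(u)x\|^p\,\dd u \le K\tau\, \|x\|^p.
\end{equation}
Thus the linear map $x \mapsto T(\cdot)(I-A)^{-\beta}x$ is bounded from $D(A)$ (with the $X$-norm) into $L^p(0,\infty;X)$. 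Since $D(A)$ is dense in $X$, a routine continuity-and-pointwise-convergence argument extends this bound to all $x \in X$, so $T(\cdot)(I-A)^{-\beta}x \in L^p(0,\infty;X)$ for every $x \in X$. Because $(I-A)^{-\beta}$ is a bijection from $X$ onto $D((-A)^\beta)$, this is precisely the hypothesis of Theorem~\ref{thm:poly_Datko} with the given $p$ and $\beta$, which then delivers all three conclusions.

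The only substantive step is the combination of observability-along-the-orbit with the dissipation identity through Fubini; the rest is a density extension and an invocation of Theorem~\ref{thm:poly_Datko}. I expect the main obstacle, though minor, to be ensuring the justification of the fundamental theorem of calculus from the absolute continuity assumption on $(0,\infty)$ together with continuity at $0$, so that the boundary term $-\|T(S)x\|^p$ is controlled and discarded cleanly.
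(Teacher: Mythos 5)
Your proposal is correct and follows essentially the same route as the paper: integrate the dissipation inequality to get $\int_0^\infty\|CT(t)x\|^p\,\dd t\le\|x\|^p$, apply the observability estimate along the orbit to bound $\int_0^\infty\|T(\cdot)(I-A)^{-\beta}x\|^p$, extend by density, and invoke Theorem~\ref{thm:poly_Datko}. The only (immaterial) difference is that you integrate continuously in $s$ and use Fubini--Tonelli, whereas the paper discretises time into blocks $[k\tau,(k+1)\tau)$ and uses contractivity of the semigroup to compare $T(t)$ with $T(k\tau)$; both yield the same bound $K\tau\|x\|^p$.
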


\begin{proof}
It follows from~\eqref{eq:abs_cont} that $\T$ is contractive and
\begin{equation}\label{eq:int_est}
\int_0^t\|CT(s)x\|^p\,\dd s\le \|x\|^p-\|T(t)x\|^p\le \|x\|^p
\end{equation}
for all $x\in D(A)$ and $t\ge0$. By the monotone convergence theorem  $ CT(\cdot)x\in L^p(0,\infty;X)$ for every $x\in D(A)$, and $\int_0^\infty\|CT(t)x\|^p\,\dd t\le \|x\|^p$. Moreover, for $k\in\ZZ_+$ and $t\in[k\tau,(k+1)\tau)$
  condition~\eqref{eq:obs} and boundedness of $\T$ imply that
$$\|T(t)(I-A)^{-\beta}x\|^p\le \|(I-A)^{-\beta}T(k\tau)x\|^p\le K\int_{k\tau}^{(k+1)\tau}\|CT(s)x\|^p\,\dd s$$
for all $x\in D(A)$.  Integrating over $[k\tau,(k+1)\tau)$ we obtain
$$\int_{k\tau}^{(k+1)\tau} \|T(t)(I-A)^{-\beta}x\|^p\,\dd t\le K \tau \int_{k\tau}^{(k+1)\tau}\|CT(t)x\|^p\,\dd t$$
for all $x\in D(A)$ and $k\in\ZZ_+$. Summing over all $k\in\ZZ_+$ gives
$$\int_0^\infty \|T(t)(I-A)^{-\beta}x\|^p\,\dd t\le K\tau \int_0^\infty\|CT(t)x\|^p\,\dd t\le K \tau\|x\|^p$$
for all $x\in D(A)$, and a  density argument yields $T(\cdot)(I-A)^{-\beta}x\in L^p(0,\infty;X)$ for all $x\in X$. Hence  $T(\cdot)x\in L^p(0,\infty;X)$ for all $x\in D((-A)^\beta)$, so the result follows from Theorem~\ref{thm:poly_Datko}.
\end{proof}

In the following result we suppose that $A$ is the generator of a $C_0$-semigroup $\T$ of contractions on a Hilbert space $X$ and that $B\in \B(U,X)$, where $U$ is another Hilbert space. Then the operator $A_B=A-BB^*$ with domain $D(A_B)=D(A)$ generates a $C_0$-semigroup $\TB$ of contractions on $X$. We may think of $\TB$ as a `damped' version of the semigroup $\T$. We refer the reader to~\cite{ChiPau23} for an in-depth study of the asymptotic behaviour of semigroups $\TB$ arising in this way (also under much milder assumptions on the operator $B$). The following result establishes polynomial decay of orbits $(T_B(t)x)_{t\ge0}$ for $x\in D(A_B)$ under a certain \emph{non-uniform observability condition}; see~\cite[Sec.~4]{ChiPau23}. This generalises~\cite[Thm.~4.4]{ChiPau23}, which included the additional assumption that $D(A)=D(A^*)$.

\begin{thm}\label{thm:obs}
Let $\T$ be a  $C_0$-semigroup of contractions on a Hilbert space $X$, with generator $A$, and let $U$ be another Hilbert space. Let $B\in\B(U,X)$ and let $\TB$ be the $C_0$-semigroup of contractions on $X$ generated by the operator $A_B=A-BB^*$  with domain $D(A_B)=D(A)$. Suppose there exist constants $K,\tau>0$ and $\beta\in(0,1]$ such that
\begin{equation}\label{eq:obs2}
\|(I-A)^{-\beta}x\|^2\le K\int_0^\tau\|B^*T(t)x\|^2\,\dd t,\qquad x\in X.
\end{equation}
Then $\|T_B(t)x\|=o(t^{1/(2\beta)})$ as $t\to\infty$ for all $x\in D(A_B)$.
\end{thm}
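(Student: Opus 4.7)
The plan is to deduce the conclusion from Lemma~\ref{lem} applied to the perturbed semigroup $\TB$, with $p = 2$ and observation operator $C = \sqrt{2}\,B^*$. The dissipation condition~\eqref{eq:abs_cont} is immediate: for $x \in D(A_B) = D(A)$ one computes
\begin{equation}
\frac{\dd}{\dd t}\|T_B(t)x\|^2 = 2\R\langle AT_B(t)x, T_B(t)x\rangle - 2\|B^*T_B(t)x\|^2 \le -\|\sqrt{2}\,B^* T_B(t)x\|^2,
\end{equation}
where the dissipativity of $A$ follows from contractivity of $\T$. The substantive task is therefore to verify the observability hypothesis of Lemma~\ref{lem}, namely $\|(I-A_B)^{-\beta}x\|^2 \lesssim \int_0^\tau \|B^*T_B(t)x\|^2\,\dd t$ for $x \in D(A_B)$, which I would achieve in two steps.

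\emph{Step~1} transports the observations from $\T$ to $\TB$. Applying $B^*$ to the Duhamel identity $T_B(t)x = T(t)x - \int_0^t T(t-s)BB^*T_B(s)x\,\dd s$ and using Cauchy--Schwarz together with the contractivity $\|T(\cdot)\| \le 1$, one readily obtains $\int_0^\tau \|B^*T(t)x\|^2\,\dd t \lesssim \int_0^\tau \|B^* T_B(t)x\|^2\,\dd t$ for every $x \in X$, with implicit constant depending only on $\tau$ and $\|B\|$. Combined with~\eqref{eq:obs2}, this gives $\|(I-A)^{-\beta}x\|^2 \lesssim \int_0^\tau \|B^*T_B(t)x\|^2\,\dd t$ for every $x \in X$.

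\emph{Step~2}, the main technical point, is the comparison $\|(I-A_B)^{-\beta}x\| \lesssim \|(I-A)^{-\beta}x\|$ for $x \in X$ and $\beta \in (0,1]$. When $\beta = 1$, this is immediate from the second resolvent identity between $A$ and $A_B$ combined with $\|(I-A_B)^{-1}\| \le 1$. For $\beta \in (0,1)$, the plan is to invoke the Balakrishnan integral representation
\begin{equation}
(I-A_B)^{-\beta} = \frac{\sin\pi\beta}{\pi}\int_0^\infty \lambda^{-\beta}(\lambda I + I - A_B)^{-1}\,\dd\lambda,
\end{equation}
valid since $I - A_B$ is m-accretive on a Hilbert space, and the analogous formula for $(I-A)^{-\beta}$. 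Inserting the resolvent identity between shifted resolvents of $A_B$ and $A$ expresses $(I-A_B)^{-\beta} - (I-A)^{-\beta}$ as an absolutely convergent integral involving $BB^*$ sandwiched between these resolvents. The essential pointwise bound is $\|(\lambda I + I - A)^{-1}x\| \le 2\|(I-A)^{-1}x\|$, a consequence of the algebraic identity $(\lambda I + I - A)^{-1} = [I - \lambda(\lambda I + I - A)^{-1}](I-A)^{-1}$; combined with $\|(\lambda I + I - A_B)^{-1}\| \le (\lambda+1)^{-1}$ and the evaluation $\int_0^\infty \lambda^{-\beta}(\lambda+1)^{-1}\,\dd\lambda = \pi/\sin\pi\beta$, it yields $\|(I-A_B)^{-\beta}x - (I-A)^{-\beta}x\| \lesssim \|(I-A)^{-1}x\|$. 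The bound $\|(I-A)^{-1}x\| \le \|(I-A)^{-(1-\beta)}\|\,\|(I-A)^{-\beta}x\|$, valid for $\beta \in (0,1]$, then completes the comparison.

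Chaining Steps~1 and~2 gives the observability hypothesis of Lemma~\ref{lem}, whose Hilbert space refinement then delivers $\|T_B(t)x\| = o(t^{-1/(2\beta)})$ as $t \to \infty$ for $x \in D(A_B)$. The main obstacle is Step~2: the earlier~\cite[Thm.~4.4]{ChiPau23} used the assumption $D(A)=D(A^*)$ to effectively reduce the comparison to a functional-calculus argument on the self-adjoint real part of $A$, whereas the Balakrishnan-based route above dispenses with that structural hypothesis and handles the general non-self-adjoint case.
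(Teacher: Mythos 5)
Your proposal is correct, and it reaches the observability hypothesis of Lemma~\ref{lem} by a genuinely different route in its key step. Your Step~1 is exactly the content of \cite[Lem.~4.3]{ChiPau23}, which the paper simply cites (and which is proved there essentially by the Duhamel--Cauchy--Schwarz argument you sketch), and your dissipation computation coincides with the paper's. The divergence is in Step~2, the comparison $\|(I-A_B)^{-\beta}x\|\lesssim\|(I-A)^{-\beta}x\|$. The paper passes to the adjoints: since $\T$ is contractive, \cite[Cor.~4.30]{Lun18} identifies $D((I-A^*)^\beta)$ and $D((I-A_B^*)^\beta)$ with the complex interpolation spaces $[X,D(A^*)]_\beta$ and $[X,D(A_B^*)]_\beta$ respectively; as $D(A^*)=D(A_B^*)$ with equivalent norms, the two fractional domains coincide with equivalent norms, and a duality computation then transfers the estimate to the negative powers of $A$ and $A_B$ themselves. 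Your argument instead works directly with $(I-A)^{-\beta}$ and $(I-A_B)^{-\beta}$ via the Balakrishnan integral, the second resolvent identity, the pointwise bound $\|(\lambda+I-A)^{-1}x\|\le2\|(I-A)^{-1}x\|$, and the evaluation $\int_0^\infty\lambda^{-\beta}(\lambda+1)^{-1}\,\dd\lambda=\pi/\sin\pi\beta$; all of these steps check out, including the separate treatment of $\beta=1$ and the final factorisation $\|(I-A)^{-1}x\|\le\|(I-A)^{-(1-\beta)}\|\,\|(I-A)^{-\beta}x\|$. What each approach buys: the paper's interpolation argument is shorter on the page but leans on the nontrivial fact that fractional domains of m-accretive operators on Hilbert spaces are complex interpolation spaces (and on the duality between $(I-A)^{-\beta}$ and $(I-A^*)^{-\beta}$), whereas your perturbation argument is more elementary and self-contained, yields the explicit bound $\|(I-A_B)^{-\beta}x-(I-A)^{-\beta}x\|\le2\|B\|^2\|(I-A)^{-1}x\|$, and would work verbatim on a Banach space. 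Both routes dispense with the hypothesis $D(A)=D(A^*)$ of the earlier result.
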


\begin{proof}
We begin by recalling from~\cite[Lem.~4.3]{ChiPau23} that 
\begin{equation}
\label{eq:var_const}
\int_0^\tau\|B^*T(t)x\|^2\,\dd t\lesssim \int_0^\tau\|B^*T_B(t)x\|^2\,\dd t,\qquad x\in X.
\end{equation}
 Since the semigroup $\T$ is assumed to be contractive, $D((I-A^*)^\beta)$ coincides with the complex interpolation space between $X$ and $D(A^*)=D(I-A^*)$ with parameter $\beta\in(0,1]$ by~\cite[Cor.~4.30]{Lun18}, and $D((I-A_B^*)^\beta)$ coincides with the complex interpolation space between $X$ and $D(A_B^*)$, in both cases with equivalent norms. Since $BB^*$ is a bounded linear operator on $X$, the spaces $D(A^*)$ and $D(A_B^*)$ are equal and their norms are equivalent. It follows that $D((I-A^*)^\beta)=D((I-A_B^*)^\beta)$ with equivalent norms, and hence
$\|(I-A^*)^\beta x\|\lesssim\|(I-A_B^*)^\beta x\|$ for all $x\in D((I-A^*)^\beta)$. Thus
$$\begin{aligned}
&\|(I-A_B)^{-\beta}x\|
=\sup_{y\in X\setminus\{0\}}\frac{|\langle x,(I-A_B^*)^{-\beta}y\rangle|}{\|y\|}\\&
\qquad =\sup_{y\in D((I-A_B^*)^\beta)\setminus\{0\}}\frac{|\langle x,y\rangle|}{\|(I-A_B^*)^\beta y\|}
\lesssim \sup_{y\in D((I-A^*)^\beta)\setminus\{0\}}\frac{|\langle x,y\rangle|}{\|(I-A^*)^\beta y\|}\\&
\qquad =\sup_{y\in X\setminus\{0\}}\frac{|\langle x,(I-A^*)^{-\beta}y\rangle|}{\|y\|}=\|(I-A)^{-\beta}x\|
\end{aligned}
$$
for all $x\in X$. Combining this estimate with~\eqref{eq:obs2} and~\eqref{eq:var_const} we obtain 
$$\|(I-A_B)^{-\beta}x\|^2\lesssim \int_0^\tau\|B^*T_B(t)x\|^2\,\dd t,\qquad x\in X.$$
Furthermore, for $x\in D(A_B)$ the map $t\mapsto\|T_B(t)x\|^2$ is differentiable on $(0,\infty)$ and dissipativity of $A$ yields
$$\frac{\dd }{\dd t}\|T_B(t)x\|^2\le-2\|B^*T_B(t)x\|^2,\qquad  t>0.$$
Hence the result follows from Lemma~\ref{lem} applied to the semigroup $\TB$ with $p=2$, $Y=U$ and $C=\sqrt{2}B^*$.
\end{proof}

We refer the reader to~\cite[Prop.~4.7]{ChiPau23} for an illustration of how Theorem~\ref{thm:obs} can be applied to a class of damped second-order systems which includes certain damped wave, beam and plate equations.

\bibliographystyle{plain}

\end{document}